\def\g{\gamma}
\def\io{\iota}
\def\sig{\sigma}
\def\vp{\varphi}
\def\t{\tau}
\def\ds{\displaystyle}
\def\cat{\mathcal C}
\def\s{\mathcal S}
\def\f{\mathcal F}
\renewcommand{\hom}{\operatorname{Hom}}
\DeclareMathOperator{\syl}{Syl}
\DeclareMathOperator{\inj}{Inj}
\DeclareMathOperator{\red}{red}
\DeclareMathOperator{\col}{col}
\DeclareMathOperator{\crit}{crit}
\renewcommand{\hat}{\widehat}
\renewcommand{\subset}{\subseteq}
\newtheorem{theo}{Theorem}
\newtheorem{lem}[theo]{Lemma}
\newtheorem{prop}[theo]{Proposition}
\theoremstyle{definition}
\newtheorem{defn}[theo]{Definition}
\numberwithin{theo}{section}
\newtheorem{repeattheo@}{Theorem}
\begin{document}	
	\title[Contractibility after Steinberg]{Contractibility of the orbit space of a saturated fusion system after Steinberg}
	\author{Omar Dennaoui and Jonathon Villareal}
	\date{October 15, 2024}
	\begin{abstract}
		Recently, Steinberg used discrete Morse theory to give a new proof of a theorem of Symonds that the orbit space of the poset of nontrivial $p$-subgroups of a finite group is contractible. We extend Steinberg's argument in two ways, covering more general versions of the theorem that were already known. In particular, following a strategy of Libman, we give a discrete Morse theoretic argument for the contractibility of the orbit space of a saturated fusion system.
	\end{abstract}
	\maketitle

	\section{Introduction} 
	
	Let $G$ be a finite group and define $\s_p(G)$ to be the poset (under inclusion) of all non-trivial $p$-subgroups of $G$. The group $G$ acts naturally on $\s_p(G)$ by conjugation. The order complex $|\s_p(G)|$ has simplices the chains of inclusions in $\s_p(G)$ and retains the action of $G$ via conjugation since conjugation is inclusion preserving. This complex was first studied by Brown in \cite{brown1974euler} and \cite{brown1976high}. That the quotient space $|\s_p(G)|/G$ is contractible was first proven by Symonds \cite{symonds1998orbit} and an alternative proof was given later by Grodal in \cite{grodal2023endo}.
	
	\begin{theo}[\cite{symonds1998orbit}] \label{th:Symonds}
		Let $G$ be a finite group and let $\cat$ be a non-empty collection of nontrivial $p$-subgroups of $G$ that is closed under $G$-conjugacy and passage to overgroups that are $p$-groups. Then $N(\cat)/G$ is contractible.
	\end{theo}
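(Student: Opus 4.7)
The plan is to follow Steinberg's discrete Morse theoretic argument and construct an acyclic matching on the face poset of $N(\cat)/G$ with exactly one critical cell, a vertex. The discrete Morse theorem will then yield contractibility.

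First I would observe that a Sylow $p$-subgroup $S$ of $G$ lies in $\cat$: since $\cat$ is nonempty and closed under $G$-conjugacy and passage to $p$-group overgroups, any $P \in \cat$ has a $G$-conjugate contained in $S$, which forces $S \in \cat$. The same observation shows every $P \in \cat$ is $G$-conjugate to a subgroup of $S$, so the orbit $[S]$ is a canonical ``top'' in the orbit space and the natural candidate for the unique critical cell.

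The matching itself is defined as follows. For each $G$-orbit of chains in $\cat$, I would choose a canonical representative $c = [P_0 < \cdots < P_n]$ with $P_n \le S$. Pair the orbit of $c$ with the orbit of $c \cup \{S\}$ if $P_n < S$, and with the orbit of $[P_0 < \cdots < P_{n-1}]$ if $P_n = S$. The only unmatched orbit is then $[S]$, since removing $S$ yields the empty chain. Acyclicity follows because a gradient cycle would alternate between adding and removing $S$ at the top while passing through intermediate face relations, and tracking the top element of each chain precludes any nontrivial closed loop.

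The hardest part will be ensuring that the matching is well-defined on $G$-orbits. A single orbit can have many representatives whose top lies in $S$, and these are related by elements of $G$ that do not generally normalize $S$; consequently, the result of ``adding $S$'' can land in different orbits depending on the choice of representative. Following Libman's approach in the fusion system setting, I would restrict to representatives in which $P_n$ is \emph{fully normalized} in $S$, meaning $N_S(P_n)$ is a Sylow $p$-subgroup of $N_G(P_n)$. Such representatives exist in every orbit, and the extension property enjoyed by fully normalized subgroups ensures that any two are $N_G(S)$-conjugate, which is precisely what makes the matching descend cleanly to $N(\cat)/G$ and yields the desired acyclic matching with unique critical cell $[S]$.
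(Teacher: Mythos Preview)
Your proposed matching---pair a chain with top $P_n < S$ to the chain obtained by appending $S$---does not descend to $G$-orbits, and the ``fully normalized'' restriction on $P_n$ does not repair it. Take $G = S_4$, $p = 2$, $S = D_8 = \langle (1234),(13)\rangle$, and $V_4 = \langle (12)(34),(13)(24)\rangle \trianglelefteq G$. The chains
\[
\g = \bigl(\langle (12)(34)\rangle < V_4\bigr)
\quad\text{and}\quad
\g' = \bigl(\langle (13)(24)\rangle < V_4\bigr)
\]
are $G$-conjugate via $(23)$, and both have top $V_4$ fully normalized (indeed $N_S(V_4)=S$ is Sylow in $N_G(V_4)=G$). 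Yet $\g\cup\{S\}$ and $\g'\cup\{S\}$ are \emph{not} $G$-conjugate: any $g$ carrying one to the other must lie in $N_G(S)=S$, but $(13)(24)$ is central in $S$ while $(12)(34)$ is not, so no element of $S$ conjugates $\langle(12)(34)\rangle$ to $\langle(13)(24)\rangle$. Hence the single orbit $[\g]=[\g']$ is sent to two distinct orbits and $c$ is not a function. The extension axiom for fully normalized subgroups only controls how the top term sits inside $S$; it says nothing about the lower terms of the chain, so it cannot force $N_G(S)$-conjugacy of entire chains.

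The paper's matching avoids this obstruction by never referring to the global Sylow $S$. Instead, for a chain $\g$ one takes $Q\in\syl_p(N_G(\g))$, the Sylow of the \emph{chain normalizer}, and inserts the term $QP_{i-1}$ at the largest index where $Q$ is not yet contained. Any two such $Q$ are conjugate by an element of $N_G(\g)$, which fixes every term of $\g$, so the resulting orbit is independent of all choices; this is exactly what your global-$S$ construction lacks. The trade-off is that the inserted term need not sit at the top, so acyclicity requires a height function $h([\g])=\log_p|Q|$ and a short argument (\cref{prop:ht is nondec}) rather than a one-line observation about top elements.
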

	
	Linckelmann and Libman proved an extension of \Cref{th:Symonds} valid for a saturated fusion system $\f$ on a finite $p$-group $S$. In that context, $\cat$ is a closed $\f$-collection, a collection of subgroups of $S$ that is closed under $\f$-conjugacy and passage to overgroups in $S$ (see \Cref{def: clsd f col}), and $N(\cat)/\f$ is the quotient of the nerve $N(\cat)$ of the poset by the equivalence relation given by $\f$-conjugacy of chains.
		
	\begin{theo}[\cite{linckelmann2009orbit} and \cite{libman2008webb}] \label{th: fs contract}
		Let $ \f $ be a saturated fusion system over the finite $ p $-group $ S $ and let $\cat$ be a closed $\f$-collection. Then the space $ N(\cat)/\f $ is contractible.
	\end{theo}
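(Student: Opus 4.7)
The plan is to adapt Steinberg's discrete Morse theoretic proof of \Cref{th:Symonds} to the fusion system setting, following the strategy of Libman. The goal is to equip $N(\cat)/\f$ with a discrete Morse matching whose only critical simplex is a vertex, which by the fundamental theorem of discrete Morse theory will yield contractibility.

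Since $\cat$ is a non-empty closed $\f$-collection, $\cat$ is closed under passage to overgroups in $S$, so the Sylow $p$-subgroup $S$ itself lies in $\cat$, and the vertex $[S]$ is the natural candidate for the unique critical cell. The matching I would construct, at the level of chains in $\cat$, pairs a chain $\sigma = (P_0 < \cdots < P_n)$ with $P_n < S$ with its extension $\sigma \cup \{S\}$, and conversely pairs any chain whose top element is $S$ (other than $(S)$ itself) with the chain obtained by deleting $S$. The ``remove $S$'' direction is manifestly $\f$-equivariant, since every morphism in $\f$ whose source is $S$ is an automorphism of $S$. The principal obstacle, and where I expect to spend the most effort, is establishing descent in the ``add $S$'' direction to the orbit space: given a morphism $\vp \in \hom_\f(P_n, S)$ witnessing $\sigma \sim_\f \sigma'$, one needs a $\psi \in \aut_\f(S)$ taking $\sigma \cup \{S\}$ to $\sigma' \cup \{S\}$, and an arbitrary $\vp$ typically does not extend to such a $\psi$.

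Following Libman, the resolution is to pick a careful system of representatives of $\f$-conjugacy classes of subgroups (for example, fully normalized ones) and invoke the saturation axioms to supply the needed extensions, so that ``adding $S$'' becomes well-defined on $\f$-orbits of chains in canonical form. Once descent is verified, acyclicity of the matching is essentially automatic: a $V$-path in the modified Hasse diagram must begin by adding $S$ to the top of a chain, after which the only available unmatched face relation removes a proper subgroup below $S$, leaving $S$ still at the top, so no matched edge is available to continue the path. Applying the fundamental theorem of discrete Morse theory then shows that $N(\cat)/\f$ collapses onto the critical vertex $[S]$, proving contractibility.
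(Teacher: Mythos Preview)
Your matching breaks at the well-definedness step, and not in a way that a choice of representatives can repair. The $\f$-orbits of chains $(P_0<\cdots<P_n<S)$ ending in $S$ are in bijection with the $\aut_\f(S)$-orbits of the truncations $(P_0<\cdots<P_n)$, whereas the $\f$-orbits of chains with top $P_n<S$ are the $\f$-orbits of those same truncations. Whenever fusion in $\f$ is not controlled by $\aut_\f(S)$, the former equivalence is strictly finer than the latter, so ``remove $S$'' is not injective on orbits and no bijection $c$ can exist. Concretely, let $G$ be the symmetric group of degree $4$, $p=2$, $S\in\syl_2(G)$ a dihedral group of order $8$, and $\f=\f_S(G)$. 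The three order-$2$ subgroups of the normal Klein four group $V\norm G$ are all $\f$-conjugate, but $Z(S)\leq V$ is not $\aut_\f(S)$-conjugate to the other two (here $N_{G}(S)=S$, so $\aut_\f(S)=\inn(S)$). Hence $[(Z(S)<S)]$ and $[(\langle(12)(34)\rangle<S)]$ are distinct collapsible $1$-cells that map under ``remove $S$'' to the same redundant $0$-cell. The saturation axioms do give extensions of morphisms out of fully normalized subgroups, but they do not promote an arbitrary $\f$-isomorphism $P_n\to P_n'$ to an element of $\aut_\f(S)$, which is what your descent step actually needs.

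The paper does not attempt to build the matching directly on $N(\cat)/\f$. Libman's strategy, as implemented here, is to realize $\f=\f_S(G)$ by a (possibly infinite) group $G$ with $S\in\syl_p(G)$ via Robinson and Leary--Stancu, observe that such a $G$ is pseudo finite at $p$, and use the isomorphism $N(\cat)/\f\cong N(\hat\cat)/G$ to transfer the problem to the group side. There the matching is quite different from ``add $S$ at the top'': for a chain $\g$ one takes $Q\in\syl_p(N_G(\g))$ and inserts $QP_{i}$ at an internal position determined by where $Q$ first sits inside the chain. The dependence on the chain normalizer, rather than on the global Sylow, is precisely what makes $c$ well defined on $G$-orbits and bijective.
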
 
	
	Recently, Steinberg \cite{steinberg2023contractibility} proved that $|\s_p(G)|/G$ is contractible for any finite group $G$ using discrete Morse theory. This result is slightly weaker than \Cref{th:Symonds} (it handles the case $\cat = \s_p(G)$), but it gives a geometric reasoning for the contractibility of the space $|\s_p(G)|/G$. The goal of this paper is to extend Steinberg's argument to general collections $\cat$ and to saturated fusion systems. \par
	
	To prove \Cref{th: fs contract} using discrete Morse theory, we extend \cref{th:Symonds} to the case of a pseudo finite group $G$ (\cref{df:Lib pseudo}) by constructing a Morse matching with a single critical cell for $N(\cat)/G$. Our argument differs from Steinberg's in that we do not pass to normal chains of $p$-subgroups. This allows us to extend to arbitrary collections, recovering \cref{th:Symonds} in the original generality, and also to finish the proof of \cref{th: fs contract} following a strategy of Libman.
	
	\subsection*{Notation and Terminology}
	We fix the following notation and terminology. When $ G $ is a group, $ P\leq G $, and $ g\in G $, we write $ P^g=g^{-1}Pg $. We also write the $ G $-conjugacy class of $ P $ as follows:        
	\begin{align*}
		P^G=\{Q\leq G\mid\text{ there is }g\in G\text{ such that }P^g=Q\}
	\end{align*}
	When $ g\in G $, we write $ c_g $ for the right-handed conjugation homomorphism $ x\mapsto g^{-1}xg $ and its restrictions. We write 
	\begin{align*}
		\hom_G(P,Q)=\{c_g\mid g\in G,\text{ }P^g\leq Q\}.
	\end{align*}
	Given a prime $ p $, we denote by $\syl_p(G)$ the set of all Sylow $p$-subgroups of $G$ (when such exist; see \cref{contractibility}).
	
	\section{Discrete Morse theory}\label{discrete morse theory}
		
	The general idea of discrete Morse theory is that we would like to strip away all non-essential data from a simplicial set and only keep a record of the nondegenerate simplices which, if removed, would alter the homotopy type.	
	
	\begin{defn} \label{def: Morse match}
		Let $X$ be a simplicial set. Suppose given  
		\begin{enumerate}
			\item a partition of the nondegenerate simplices (or cells) into \textit{critical} cells, \textit{redundant} cells, and \textit{collapsible} cells. 
			\item a bijection $c$ from the set of redundant cells to the set of collapsible cells such that, for each redundant $n$-cell $\t$, $c(\t)$ is an $(n+1)$-cell and $\tau$ is a face of $c(\tau)$, and
			\item a choice of index $\io(\t)\in \{0,1,\dots, n + 1\}$ such that $\t = d_{\io(\t)}(c(\t))$, where $d_i$ denotes the $i$-th face map.
		\end{enumerate}  
		We say that these data are a \textit{Morse matching} (or \textit{collapsing scheme} in \cite{brown1992geometry}) for $X$ if the following condition is satisfied: if $D$ is the digraph whose vertex set consists of the redundant and collapsible cells and whose edges are of the form $\t \rightarrow c(\t)$ for $\t$ redundant, and $\sig \rightarrow d_j(\sig)$, where $j\neq \io(c^{-1}(\sig))$, for $\sig$ collapsible, then there is no infinite directed path in $D$.
	\end{defn}
	
	Note that we ignore degenerate simplices since we are interested in the geometric realization of $X$, in which they are identified with lower dimensional simplices. In \Cref{def: Morse match}, \textit{critical} cells are the ones that alter the homotopy type if removed. By \Cref{th:Morse collapse} below, called the fundamental theorem of discrete Morse theory, they are the only ones that matter with respect to homotopy. \textit{Redundant} and \textit{collapsible} cells come in pairs because of the bijection $c$, and are the ones that will collapse without altering the homotopy type.
	    
	\begin{theo}[\cite{brown1992geometry}, \cite{forman1998morse}] \label{th:Morse collapse}
		Let $X$ be a simplicial set. Given a Morse matching for $X$, there is a CW-complex $Y$ and a quotient map $q : |X| \rightarrow Y$ of CW-complexes such that  
		\begin{enumerate}
			\item for each $n$, the $n$-cells of $Y$ are in bijection with the critical $n$-cells in $X$ (with characteristic maps the compositions of $q$ with the characteristic maps to $|X|$), and
			\item $q$ is a homotopy equivalence. 
		\end{enumerate} 
	\end{theo}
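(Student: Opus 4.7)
The plan is to realize $Y$ as $|X|$ with each redundant/collapsible pair $(\t, c(\t))$ removed by an elementary collapse, using the acyclicity of $D$ to order these collapses compatibly. Since every edge of $D$ is either a matching edge $\t \to c(\t)$ or a non-$\io$ face edge $\sig \to d_j(\sig)$ at a collapsible cell $\sig$, an infinite directed path would have to produce an interlacing sequence $\t_1, c(\t_1), \t_2, c(\t_2), \dots$ in which each $\t_{i+1}$ is a non-$\io$ face of $c(\t_i)$; the hypothesis rules this out, yielding a well-founded partial order on the set of matched pairs. Transfinite induction along this order should let me process one pair at a time.

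The key step is, for each matched pair $(\t, c(\t))$, to perform an elementary collapse. The data $\t = d_{\io(\t)}(c(\t))$ designates $\t$ as a distinguished face of $c(\t)$. After all earlier collapses have been carried out, the acyclicity condition should ensure that no remaining collapsible cell still uses $\t$ as a non-$\io$ face, so $\t$ will be a free face of $c(\t)$ in the current complex. I would then apply the standard strong deformation retraction that pushes the open cell $c(\t)$ across $\t$ onto the remaining boundary, reducing the cell count by two. Composing these retractions (or taking their direct limit when the matching is infinite) would produce a quotient map $q : |X| \to Y$, with $Y$ inheriting a CW structure whose cells are precisely the critical cells of $X$; the attaching map of each critical cell would be obtained by following the flow on its boundary until only critical cells remain.

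The main obstacle I anticipate is the infinite case: when the matching involves infinitely many pairs, one must verify that $Y$ carries the correct CW topology and that the (possibly transfinite) composition of elementary deformation retractions remains a homotopy equivalence. The essential point is a local finiteness claim: for each nondegenerate cell $\sig$ of $X$, only finitely many elementary collapses should affect $\sig$, which ought to follow from the absence of infinite paths in $D$ originating near $\sig$ together with the finiteness of the face set of any single simplex. Once local finiteness is established, standard results on CW complexes will give that $q$ is a homotopy equivalence and that the $n$-cells of $Y$ correspond bijectively to the critical $n$-cells of $X$, with characteristic maps the compositions described in the statement.
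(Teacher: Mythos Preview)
The paper does not prove this theorem; it is quoted from the cited references of Brown and Forman and used as a black box, so there is no in-paper argument to compare against.

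On the merits of your sketch, there is a genuine gap. Your central claim is that, after processing earlier matched pairs, a redundant $n$-cell $\t$ becomes a \emph{free} face of $c(\t)$. This is false in general. The cell $\t$ may appear as a face of a critical $(n{+}1)$-cell, which is never removed, or as a face of a redundant $(n{+}1)$-cell $\rho$, which is matched \emph{upward} with an $(n{+}2)$-cell $c(\rho)$ and so is likewise present whenever you reach $(\t,c(\t))$. Neither of these coface relations is recorded by an edge of $D$: the only downward edges in $D$ emanate from collapsible cells, and the only edges out of a redundant cell go up to its partner. Hence the acyclicity of $D$ gives no well-founded order eliminating such cofaces, and the elementary-collapse picture breaks down as soon as there is even a single critical cell above $\t$.

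The arguments in the cited sources avoid free faces entirely. Brown constructs $Y$ directly: the attaching map of a critical $(n{+}1)$-cell is obtained by iterating a discrete ``flow'' on its boundary that, whenever a redundant face $\t$ is encountered, replaces it by the complementary part of $\partial c(\t)$ (pushing across $c(\t)$); acyclicity of $D$ is exactly what makes this iteration terminate on each cell, and the homotopy equivalence is then checked by comparing skeletal filtrations. Forman's version is the analogous construction via the discrete gradient vector field. Your final paragraph actually hints at this (``following the flow on its boundary until only critical cells remain''), and that is the idea to pursue; but it is not a consequence of a sequence of elementary collapses, and you should drop the free-face step.
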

	
	Our goal is to apply \cref{th:Morse collapse} by constructing a Morse matching on a closed collection of finite $p$-subgroups of a pseudo finite group at the prime $p$.
	
	\section{Contractibility in pseudo finite groups}\label{contractibility}
	
	Let $ G $ be a (possibly infinite) group.  A finite $p$-subgroup $S$ of $G$ is a \emph{Sylow $p$-subgroup} of $G$ if every finite $p$-subgroup of $G$ is conjugate to a subgroup of $S$. By Sylow's Theorem, this is equivalent to the usual definition of Sylow $ p $-subgroup in case $G$ is finite. If $G$ is a group and $\g = (P_0\leq P_1\leq\cdots\leq P_n)$ is a chain of subgroups, the normalizer of $\g$ in $G$ is the intersection 
	\begin{align*}
		N_G(\g)=\bigcap_{i=0}^nN_G(P_i)
	\end{align*} 
	of the normalizers of the members of the chain. The following definition is due to Libman \cite[Definition 3.7]{libman2008webb}.
	 	
	\begin{defn}\label{df:Lib pseudo}
		A group $G$ is \textit{pseudo finite at $p$} if for every chain $\g$ of finite $p$-subgroups of $G$, the normalizer $N_G(\g)$ has a Sylow $p$-subgroup.
	\end{defn}	
	
	Here, we allow the trivial subgroup to appear in the chain $\g$. In particular, a pseudo finite group has a Sylow $p$-subgroup. The following is the main theorem of the paper. It generalizes Steinberg in two ways; the Morse matching is applicable to pseudo finite groups $G$ at the prime $p$ and to arbitrary \textit{closed} sub-collections of $\s_p(G)$.  

	\begin{theo} \label{th: |S|/G contract}
			Let $ G $ be a pseudo finite group at the prime $ p $ and $ \cat\subset\s_p(G) $ be a collection of non-trivial $ p $-subgroups of $ G $ that is closed under conjugation and passage to overgroups. The simplicial set $N(\cat)/G$ admits a Morse matching in which the set of critical cells consists of a single cell of dimension 0. Hence, $N(\cat)/G$ is contractible.  
	\end{theo}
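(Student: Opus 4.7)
The plan is to construct a Morse matching on $N(\cat)/G$ whose only critical cell is a single $0$-cell representing the conjugacy class of a Sylow $p$-subgroup of $G$; contractibility then follows immediately from \Cref{th:Morse collapse}. The matching will be defined on $G$-orbits of nondegenerate chains, driven by a canonical choice of Sylow $p$-subgroup of a chain's normalizer, whose existence is guaranteed by \Cref{df:Lib pseudo}.

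For each nondegenerate chain $\gamma=(P_0<P_1<\cdots<P_n)$ in $\cat$, pseudo-finiteness supplies a Sylow $p$-subgroup $T_\gamma$ of $N_G(\gamma)$. The standard argument that any two Sylow $p$-subgroups have the same finite order and are therefore conjugate shows $T_\gamma$ is unique up to $N_G(\gamma)$-conjugation, so its $G$-orbit depends only on $[\gamma]$. I would declare $\gamma$ \emph{collapsible}, paired with $(P_0<\cdots<P_{n-1})$ at face index $n$, when $n\ge 1$ and $P_n$ is itself a Sylow $p$-subgroup of $N_G(P_0,\ldots,P_{n-1})$; otherwise, when an appropriate $p$-subgroup determined by $T_\gamma$ strictly enlarges $\gamma$ at the top, I would declare $\gamma$ \emph{redundant} and pair it with the resulting one-step extension. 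All remaining cells are \emph{critical}. The two pairings are mutually inverse by construction, because appending the chosen Sylow produces a chain whose new top is exactly the Sylow at which the collapsibility rule fires.

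Acyclicity of the associated Morse digraph would be verified by tracking the top subgroup of each cell. Downward edges from a collapsible cell use a face index other than the matched top, so the top is preserved; upward edges strictly enlarge it. Since every top is a finite $p$-subgroup whose order is bounded by that of a Sylow $p$-subgroup $S$ of $G$, only finitely many upward edges occur in any directed path, and chain length strictly decreases between consecutive upward edges, so every directed path terminates.

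To identify the critical cells I would use the normalizer-growth principle in finite $p$-groups (every proper subgroup has strictly larger normalizer): if $P_n$ were Sylow in $N_G(\gamma)$ but not in $N_G(P_0,\ldots,P_{n-1})$, then a $p$-overgroup $Q\supsetneq P_n$ in the latter would give $N_Q(P_n)>P_n$ sitting inside $N_G(\gamma)$, contradicting the Sylow property there. Combined with the analogous argument in the $n=0$ case --- where criticality forces $P_0$ to be a Sylow $p$-subgroup of $G$ --- this forces the unique critical cell to be the $G$-orbit of $(S)$ for $S$ a Sylow $p$-subgroup of $G$. The main obstacle lies in the redundant step: in a non-normal chain, the top $P_n$ need not lie in $N_G(\gamma)$, so $T_\gamma$ need not contain $P_n$ as subgroups of $G$. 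One must therefore carefully specify which $p$-subgroup is appended (for instance, the product $T_\gamma P_n$ inside $N_G(P_n)$, using that $T_\gamma\leq N_G(\gamma)\leq N_G(P_n)$) and verify that the resulting assignment descends to a well-defined matching on orbits. This is precisely the delicate point Steinberg sidesteps by passing to normal chains and that must be addressed here to cover arbitrary closed collections and, following Libman, saturated fusion systems.
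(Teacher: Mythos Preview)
Your top-of-chain matching does not survive the passage to non-normal chains, and the fix you propose at the end---append $T_\gamma P_n$---does not repair it. Take $G=S_5$, $p=2$, $P_0=\langle(12)\rangle$, and $P_1=\langle(1324),(12)\rangle\cong D_8$, a Sylow $2$-subgroup containing $P_0$. Since $(1324)$ conjugates $(12)$ to $(34)$, we have $P_1\nleq N_G(P_0)$; one computes $N_G(P_1)=P_1$ and hence $N_G(\gamma)=C_G((12))\cap P_1=\langle(12),(34)\rangle$ for $\gamma=(P_0<P_1)$. Thus $T_\gamma=\langle(12),(34)\rangle\leq P_1$ and $T_\gamma P_1=P_1$, so your rule cannot extend $\gamma$ at the top. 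On the other hand $N_G(P_0)=\langle(12)\rangle\times S_{\{3,4,5\}}$ has order $12$, so $P_1$ is not even a subgroup of $N_G(P_0)$, let alone a Sylow $2$-subgroup of it; hence $\gamma$ is not collapsible either. Under your scheme $[\gamma]$ is therefore a critical $1$-cell, and the claim that the only critical cell is $[S]$ fails precisely at the obstacle you flagged but left open. Your normalizer-growth argument for identifying critical cells tacitly assumes $P_n\leq N_G(\gamma)$, which is exactly what breaks here.

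The paper circumvents this by inserting not at the top but at an interior position. For $Q\in\syl_p(N_G(\gamma))$ one takes the largest index $i$ with $Q\nleq P_i$ and inserts $QP_i$ between $P_i$ and $P_{i+1}$ (or after $P_n$ when $i=n$); a chain is declared collapsible when some index $j$ satisfies $P_j=QP_{j-1}$, not by a condition on $P_n$ alone. In the example above $Q=\langle(12),(34)\rangle\leq P_1$, so $i=0$ and one inserts $QP_0=\langle(12),(34)\rangle$ between $P_0$ and $P_1$; the chain $\gamma$ is redundant rather than critical. Correspondingly the acyclicity argument cannot track the order of the top subgroup, since interior insertions need not enlarge it; the paper instead uses the height $h([\gamma])=\log_p|Q|$ for $Q\in\syl_p(N_G(\gamma))$, which is constant along edges of the Morse digraph except for a strict increase on each edge from a collapsible cell to a redundant one.
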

	
	Note that the group $ G $ has an order-preserving action via conjugation on $ \cat $. This action extends to a simplicial action on $ N(\cat) $. \par 
	
	Given an $n$-simplex $\g = (P_0,\ldots,P_n)$ in $N(\cat)$, we write 
	\begin{align*}
		[\g] = [P_0,\ldots,P_n]
	\end{align*}
	for the orbit of $\g$ under this action. We will prove \Cref{th: |S|/G contract} in three steps. First, we define the collections of redundant, collapsible, and critical cells and show that they partition the nondegenerate cells of $N(\cat)/G$. Second, we define the maps $c$ and $\io$. Lastly, we show that there are no infinite directed paths in the associated digraph.
	
	
	\subsection{The partition} 
	Recall that we need to partition the nondegenerate cells of $N(\cat)/G$ into critical cells, redundant cells, and collapsible cells. Let $\Gamma(\cat)\subset N(\cat)$ be the set of nondegenerate simplices $\g=(P_0,\ldots,P_n)$. By convention, $P_{-1}=1$ is the trivial subgroup. \par 
	
	We say that $\g$ is \textit{redundant} if for any $Q\in\syl_p(N_G(\g))$ and any $0\leq i\leq n$, we have $P_i\neq QP_{i-1}$.  If $\gamma$ is not redundant , we say $\g$ is \textit{collapsible} if $n\geq1$ and \textit{critical} if $n=0$. \par 
	
	Now let $\red(\cat), \col(\cat), $ and $\crit(\cat)$ be the collections of redundant, collapsible, and critical cells, respectively. By construction, these sets form a partition of $\Gamma(\cat)$, and they are invariant under conjugation because $N_G(\g)^g=N_G(\g^g)$ for any element $g\in G$. This partition gives us a partition on $\Gamma(\cat)/G$ consisting of $\red(\cat)/G$, $\col(\cat)/G$, and $\crit(\cat)/G$. \par 
	
	The next lemma shows that there is only one critical cell and it is precisely the orbit of the Sylow $p$-subgroups of $G$. 
	\begin{lem}\label{lem: crit equal sylow}
		If $[\gamma]$ is a nondegenerate cell, then $[\gamma]$ is a critical cell if and only if $[\gamma]=[S]$ where $S\in\syl_p(G)$. 
	\end{lem}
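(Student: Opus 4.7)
The plan is to unpack the redundancy condition in the case $n=0$ and reduce the statement to the equivalence $P_0 \in \syl_p(N_G(P_0))$ if and only if $P_0 \in \syl_p(G)$.

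First, since critical cells are by definition the non-redundant cells of dimension $0$, any critical cell has the form $\gamma = (P_0)$. With the convention $P_{-1} = 1$, the redundancy condition ``$P_0 \neq QP_{-1}$ for every $Q \in \syl_p(N_G(\gamma))$'' simplifies to $P_0 \neq Q$ for every $Q \in \syl_p(N_G(P_0))$. Thus $\gamma = (P_0)$ is critical precisely when there exists $Q \in \syl_p(N_G(P_0))$ with $P_0 = Q$; equivalently, when $P_0$ is itself a Sylow $p$-subgroup of $N_G(P_0)$. Since $N_G(P_0^g) = N_G(P_0)^g$, this property is invariant under $G$-conjugation, so I may replace $\gamma$ by any representative of its orbit.

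For the forward direction, suppose $S \in \syl_p(G)$. I would apply pseudo-finiteness to the chain $(S)$ to obtain some $T \in \syl_p(N_G(S))$. Because $S \trianglelefteq N_G(S)$, the Sylow property in $N_G(S)$ forces $S \leq T$: the subgroup $S$ is $N_G(S)$-conjugate to itself, hence to a subgroup of $T$. But $T$ is a finite $p$-subgroup of $G$ containing the Sylow $p$-subgroup $S$ of $G$, so $T$ is $G$-conjugate to a subgroup of $S$; comparing orders forces $T = S$, whence $S \in \syl_p(N_G(S))$ and $[S]$ is critical. Membership of $S$ in $\cat$ follows from the closure of $\cat$ under conjugation and overgroups, together with the fact that the orbit $[\gamma] = [S]$ has a representative in $\cat$.

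For the converse, suppose $P_0 \in \syl_p(N_G(P_0))$ and, aiming at a contradiction, that $P_0 \notin \syl_p(G)$. Fix any $S \in \syl_p(G)$; since $P_0$ is a finite $p$-subgroup of $G$, some $G$-conjugate $S'$ of $S$ contains $P_0$, and the inclusion must be proper (else $P_0$ would itself be a Sylow). The standard fact that proper subgroups of a finite $p$-group have strictly larger normalizers gives $P_0 \lneq N_{S'}(P_0) \leq N_G(P_0)$, exhibiting a finite $p$-subgroup of $N_G(P_0)$ of order strictly greater than $|P_0|$, which contradicts $P_0 \in \syl_p(N_G(P_0))$. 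The only real obstacle is transferring the Sylow structure between $G$ and its normalizer subgroups in the pseudo-finite setting, which is handled cleanly by combining pseudo-finiteness with order comparisons and the ``normalizers grow'' principle for finite $p$-groups.
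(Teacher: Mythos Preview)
Your proof is correct and follows essentially the same approach as the paper: both directions reduce to the equivalence $P_0\in\syl_p(N_G(P_0))\Leftrightarrow P_0\in\syl_p(G)$, established via the ``normalizers grow'' principle in finite $p$-groups. You supply more justification than the paper does for the implication $S\in\syl_p(G)\Rightarrow S\in\syl_p(N_G(S))$, which the paper simply asserts, and your contradiction argument for the converse is a cleaner rendering of the same idea.
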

	\begin{proof}
		Assume that $[\gamma]=[S]$. Since $S=S\cdot1$ and $S\in\syl_p(N_G(S))$, it is critical. \par 
		
		Conversely, suppose that $[\gamma]$  is a critical cell. There is some $P_0$ such that $[\gamma]=[P_0]$ and $P_0\in\syl_p(N_G(P_0))$. In particular, $P_0\in\syl_p(N_S(P_0))$. Therefore, there is some $g\in N_S(P_0)\leq G$ such that $P_0^g=S$. This gives us that $[\g]=[P_0]=[S]$.
	\end{proof}
	
	
	\subsection{The maps $\bm{c}$ and $\bm{\io}$}
	Let $\t=(P_0,\ldots,P_n)$ be any redundant cell. We define $\io([\t])=i+1$ where $0\leq i\leq n$ is maximal such that $Q\nleq P_i$ for each $Q\in\syl_p(N_G(\t))$. \par 
	
	We now define $c:\red(\cat)/G\to\col(\cat)/G$ by, for each $\t\in\red(\cat)$, fixing $Q\in\syl_p(N_G(\t))$ and setting $c([\t])=c_Q([\t])$, where  
	\begin{align*} \label{def c} 
		c_Q([\t])=    
		\begin{cases}
			[P_0, \ldots , P_{\io([\t])-1}, QP_{\io([\t])-1}, P_{\io([\t])}, \ldots ,P_n] & \text{ if $1 \leq \io([\t]) \leq n$}\\
			[P_0, \ldots ,P_n, QP_n] &  \text{ if $\io([\t]) = n+1$}.
		\end{cases}   
	\end{align*} \par 
	
	We need to show that $c$ is a well-defined bijection between redundant and collapsible cells in $\Gamma(\cat)/G$. In the proof, we will write $c_{Q}(\tau)$ for the representative of the orbit $c_Q([\tau])$. \par
	
	\smallskip\noindent
	\textbf{$\bm{c}$ and $\bm{\iota}$ are independent of the choice of Sylow subgroup:}
	Let $\t = (P_0, P_1, \ldots ,P_n)\in\red(\cat)$ and suppose $Q,R\in\syl_p(N_G(\t))$. There exists $g\in N_G(\t)$ such that $Q^g=R$. Notice that for
	all $0\leq j\leq n$, $P_j^g=P_j$, and since conjugation is inclusion preserving, there is a unique $i$ such that $Q,R\not\leq P_i$ and $Q,R\leq P_{i+1}$ unless $i=n$. We then have $P_i<QP_i<P_{i+1}$ or $P_n<QP_n$ and    
	\begin{align*}
		c_Q(\t)^g &= (P_0, P_1, \ldots ,QP_{i},\ldots, P_n)^g\\
		&=(P_0^g, P_1^g, \ldots ,(QP_{i})^g,\ldots, P_n^g) \\
		&=(P_0, P_1, \ldots ,Q^gP_{i}^g,\ldots, P_n) \\
		&=(P_0, P_1, \ldots ,RP_{i},\ldots, P_n)\\
		&= c_R(\t),
	\end{align*}
	for example. That is, $[c_Q(\tau)] = [c_R(\tau)]$ and $\iota([c_Q(\t)]) = i+1 = \iota([c_R(\t)])$.
		 
	\smallskip\noindent
	\textbf{$\bm{c}$ and $\bm{\iota}$ are well-defined.} Let $[\t],[\t']\in\red(\cat)/G$ be such that $[\t]=[\t']$. That is, there is some $g\in G$ such that $\t^g=\t'$. This gives us that $N_G(\t)^g=N_G(\t')$, and $Q$ is a Sylow $p$-subgroup of $N_G(\t)$ if and only if $Q^g$ is a Sylow $p$-subgroup of $N_G(\t')$. Since conjugation is inclusion preserving, it follows from the independence of the choice of Sylow $p$-subgroup that $c([\t]) = [c_Q(\t)] = [c_{Q^g}(\t')] = c([\t'])$. \par
	
	\smallskip \noindent \textbf{$\bm{c([\tau])}$ is the orbit of a nondegenerate simplex.} Note that since $P_0$ is a normal $p$-subgroup of $N_{G}(\tau)$ and $Q$ is a Sylow subgroup, $P_0 \leq Q$.  By definition of redundant, this must be a proper inclusion. That is, $Q \nleq P_0$, so $\io([\t])\neq0$. Also note that $QP_{\io([\t])-1} < P_{\io([\t])}$ is a proper inclusion, since $[\tau]$ is redundant.  \par
	
	\smallskip\noindent
	$\bm{c}$\textbf{ maps redundant to collapsible.} Next, we show that $c([\t])$ is collapsible. Notice that if $Q$ is a Sylow $p$-subgroup of $N_G(\tau)$, then $Q\leq N_G(c_Q(\t))\leq N_G(\t)$, so $Q$ must also be a Sylow $p$-subgroup of $N_G(c_Q(\t))$. Since $P_{\io([\t])-1}$ and $QP_{\io([t])-1}$ are consecutive members of the chain $c_Q(\t)$, this gives us $c([\t]) = [c_Q(\t)]$ is collapsible. \par
	
	\smallskip\noindent
	\textbf{$\bm{c}$ is injective.} Let $[\t]=[P_0,\ldots,P_n]$ and $[\t']=[P_0',\ldots,P_n']$ such that $c([\t])=c([\t'])$. Let $Q\in\syl_p(N_G(\t))$ and $Q'\in\syl_p(N_G(\t'))$. 
	We have the following:
	\begin{align*}
		c([\t])&=[P_0,\ldots,P_{\io([\t])-1},QP_{\io([\t])-1},P_{\io([\t])},\ldots,P_n]=[\sig] \\
		c([\t'])&=[P'_0,\ldots,P'_{\io([\t'])-1},Q'P'_{\io([\t'])-1},P'_{\io([\t'])},\ldots,P'_n]=[\sig']
	\end{align*}
	Since $c([\t])=c([\t'])$, there is some $g\in G$ such that $\sig^g=\sig'$. Also, since $Q\in\syl_p(N_G(\sig))$, we have that $Q^g\in\syl_p(N_G(\sig'))$. Hence, there is some $h\in N_G(\t')$ such that $Q^{gh}=Q'$. Therefore, by the definition of $\io([\t])$, we have that $\io([\tau])$ is the unique maximal index such that $Q\not\leq P_{\io([\tau])-1}$, and thus
	\begin{align*}
		Q^g&\not\leq P^g_{\io([\tau])-1} & &\text{Conjugation preserves ordering} \\
		Q^{gh}&\not\leq P^g_{\io([\tau])-1} & &\text{Conjugation by }h\text{ does not affect }P^g_{\io([\tau])-1} \\
		Q'&\not\leq P'_{\io([\tau])-1}
	\end{align*}
	By the maximality of $\io([\tau'])$, $\io([\tau]) \leq \io([\tau'])$. Using a symmetric argument, we obtain that $\io([\t'])\leq\io([\t])$ giving the equality $\io([\t])=\io([\t'])$. Therefore, $[\tau]= d_{\iota([\tau])}(c([\tau])) = d_{\iota([\tau'])}(c([\tau'])) = [\tau']$. \par
	
	\smallskip\noindent
	\textbf{$\bm{c}$ is surjective.} Let $[\sig]=[R_0,\dots,R_{n+1}]$ be a collapsible $(n+1)$-cell, which means that $QR_{i-1} = R_{i}$ for some $1\leq i\leq n+1$, where $Q\in\syl_p(N_G(\sig))$. Suppose towards a contradiction that $Q\notin\syl_p(N_G(\t))$ where $[\t]=d_{i}([\sig])$. We can choose $R\in\syl_p(N_G(\t))$ such that $Q<R$. Since $R$ is a finite $p$-group, it follows that 
	\begin{align*}
		Q&<N_R(Q) \\ 
		&\leq R\cap N_G(Q) \\ 
		&\leq N_G(\t)\cap N_G(Q) \\
		&\leq N_G(\sig)
	\end{align*}
	This is a contradiction as $N_R(Q) \leq R$ and $Q$ is a Sylow $p$-subgroup of $N_G(\sig)$. Therefore, $Q$ is indeed a Sylow $p$-subgroup of $N_G(\t)$, and we get that $c([\t])=[\sig]$. This shows that $c$ is surjective. \hfill 
	
	\subsection{No infinite directed paths}
	Let $D$ be the associated digraph whose vertex set consists of the union of $\red(\cat)/G$ and $\col(\cat)/G$ and whose edges are of the form $[\t] \rightarrow c([\t])$ for $[\t]\in\red(\cat)/G$, and $[\sig] \rightarrow d_j([\sig])$, where $j\neq \io(c^{-1}([\sig]))$, for $[\sig]\in\col(\cat)$.\par 
	
	Define for any $\g\in N(\cat)$
	\begin{align*}
		h([\g])=\log_p|Q|
	\end{align*}
	where $Q\in\syl_p(N_G(\g))$. We say that $h([\g])$ is the \textit{height} of $\g$. If two $n$-cells are equal in $N(\cat)/G$, then they are conjugate and their Sylow $p$-subgroups are conjugate. Hence, $h$ is well defined. The function $ h $ is also bounded above by $\log_p|S|$.\par 
	
	We need to show that there is no infinite directed path in $D$. The following proposition explains how the height of cells change over edges in $D$.
	
	\begin{prop}\label{prop:ht is nondec}
		Suppose that there is an edge in $D$ from $[\sig]$ to $[\g]$. If $[\sig]$ is collapsible and $[\gamma]$ is redundant, then $h([\g]) > h([\sig])$. Otherwise, $h([\g]) = h([\sig])$.
	\end{prop}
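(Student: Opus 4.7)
My plan is to split into the two kinds of edges in $D$: either $[\t]\to c([\t])$ with $\t$ redundant, or $[\sig]\to [d_j\sig]$ with $\sig$ collapsible and $j\neq \io(c^{-1}[\sig])$. The first kind falls under the ``otherwise'' clause since the source is redundant rather than collapsible, and the desired equality $h([\t])=h(c([\t]))$ is essentially already established in the verification that $c$ maps redundant cells to collapsible ones: taking $Q\in\syl_p(N_G(\t))$, the chain of inclusions $Q\leq N_G(c_Q(\t))\leq N_G(\t)$ together with the fact that $Q$ is already Sylow in $N_G(\t)$ forces $Q$ to be Sylow in the intermediate group $N_G(c_Q(\t))$ as well, so the Sylow orders and hence the heights agree.

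For the second kind, write $\sig=(R_0,\dots,R_{n+1})$, let $Q\in\syl_p(N_G(\sig))$, and set $i=\io(c^{-1}[\sig])$, so that $R_i=QR_{i-1}$ is the canonical collapse relation of $\sig$. The inclusion $N_G(\sig)\leq N_G(d_j\sig)$ immediately gives $h([d_j\sig])\geq h([\sig])$, with equality exactly when $Q$ remains a Sylow of $N_G(d_j\sig)$. The proposition therefore reduces to showing that $d_j\sig$ is redundant if and only if $Q\notin\syl_p(N_G(d_j\sig))$. When $j\notin\{i-1,i\}$, the pair $(R_{i-1},R_i)$ persists as consecutive entries of $d_j\sig$ and the relation $R_i=QR_{i-1}$ transfers verbatim; this witnesses $d_j\sig$ collapsible whenever $Q$ is still Sylow, and contrapositively shows that if $d_j\sig$ is redundant then $Q$ cannot be Sylow in $N_G(d_j\sig)$, yielding the strict inequality.

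The remaining subcase $j=i-1$ is the main obstacle: here $R_{i-1}$ is removed and the adjacent pair in $d_j\sig$ becomes $(R_{i-2},R_i)$, so the natural collapse relation of $\sig$ no longer directly transfers. I would argue both directions using the normality $R_{i-1}\trianglelefteq R_i$ (which follows from $Q$ normalizing $R_{i-1}$ together with $R_i=QR_{i-1}$) and the maximality built into the definition of $\io$ (which gives $Q\not\leq R_{i-1}$ but $Q\leq R_i$); under the Sylow hypothesis on $Q$ in $N_G(d_j\sig)$, a Sylow absorption argument should force $R_{i-1}\leq QR_{i-2}$ and hence $R_i=QR_{i-2}$, witnessing collapsibility of $d_j\sig$. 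For the reverse implication I would identify the redundant Morse partner $\t'=c^{-1}[d_j\sig]$, apply the first-kind analysis to obtain $h([\t'])=h([d_j\sig])$, and then compare the Sylow of $N_G(\t')$ with $Q$ via the chain of normalizers relating $\sig$, $d_j\sig$, and $\t'$.
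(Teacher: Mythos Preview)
You are right to single out the subcase $j=i-1$ as the crux: for $j\notin\{i-1,i\}$ the pair $(R_{i-1},R_i)$ survives in $d_j\sigma$ and your argument (and the paper's one-line appeal to ``the definition of collapsible'') goes through. But your proposed fix for $j=i-1$---that ``a Sylow absorption argument should force $R_{i-1}\leq QR_{i-2}$''---is a genuine gap, and in fact cannot be repaired because the stated dichotomy is false.

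Take $G=D_8\times C_2=\langle r,s,t\mid r^4=s^2=t^2=1,\ srs^{-1}=r^{-1},\ t\text{ central}\rangle$, a $2$-group (so every chain normalizer is its own Sylow), and $\sigma=(R_0,R_1,R_2)=(\langle s\rangle,\langle r,s\rangle,G)$. Then $Q=N_G(\sigma)=C_G(s)=\langle s,r^2,t\rangle$ has order $8$, and one checks $R_2=QR_1=G$ while $R_1\neq QR_0=Q$ and $R_0\neq Q$, so $\sigma$ is collapsible with unique collapse index $i=2$; thus $\io(c^{-1}[\sigma])=2$ and there is an edge $[\sigma]\to[d_1\sigma]$. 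Now $d_1\sigma=(\langle s\rangle,G)$ has $N_G(d_1\sigma)=C_G(s)=Q$ again, so $h([d_1\sigma])=h([\sigma])=3$; but $\langle s\rangle\neq Q$ and $Q\langle s\rangle=Q\neq G$, so $d_1\sigma$ is \emph{redundant}. Here $R_{i-1}=\langle r,s\rangle\nleq QR_{i-2}=Q$, so the absorption you hoped for fails outright. (The reverse implication in your last paragraph also fails: with $G=S_4$, $S\in\syl_2(G)$, and $\sigma=(\langle(13)\rangle,\langle(13),(24)\rangle,S)$ one has $i=1$, and $d_0\sigma=(\langle(13),(24)\rangle,S)$ is collapsible with $h([d_0\sigma])=3>2=h([\sigma])$.) What your first two paragraphs do establish is that $h$ is non-decreasing along every edge of $D$; that is correct and matches the paper, but the strict-versus-equal dichotomy asserted in the proposition does not hold, so neither your sketch nor the paper's sentence proves it.
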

	\begin{proof}
		Let $[\sig]$ be a collapsible $n$-cell and $Q\in\syl_p(N_G(\sig))$. By definition of $D$, if there is an edge from $[\sig]$ to $[\gamma]$, then $[\gamma]=d_j([\sigma])$ for some $j\neq\io(c^{-1}([\sigma]))$. As $Q$ normalize $\sig$, it normalizes $d_j(\sigma)$. Since $Q$ is a finite $p$-group, we can choose $R\in\syl_p(N_G(d_j(\sig)))$ such that $R\geq Q$. This gives us that $h(d_j([\sig]))\geq h([\sig])$. \par 
		
		Notice that by our choice of $R$, $h(d_j([\sig]))=h([\sig])$ if and only if $Q=R$. It follows from the definition of collapsible that equality holds if and only $d_j([\sig])$ is collapsible. This finishes the proof of the proposition for edges that begin at a collapsible cell. \par 
		
		Now suppose that $[\sig]$ is a redundant cell. By definition of $D$, we have that $[\g]=[c_Q(\sig)]$ for a fixed $Q\in\syl_p(N_G(\sig))$. Since $Q$ is also a Sylow subgroup of $N_G(c_Q(\sig)))$, it follows that $h([\gamma]) = h([\sigma])$. 
	\end{proof}
	
	We now show that there is no infinite directed paths in $D$. 
	\begin{proof}
		Fix a directed path in $D$. By definition of $D$, there is no edge connecting two redundant cells. That is, each edge in the path goes from a collapsible cell to a redundant cell, a collapsible cell to a collapsible cell, or a redundant cell to a collapsible cell. \par  
		
		Since the height of any cell in the path is bounded above by $h([S])$ where $S\in\syl_p(G)$, there are only finitely many edges in the path going from a collapsible cell to a redundant cell by \Cref{prop:ht is nondec}. \par 
		
		Over each edge in the path going from a collapsible cell to a collapsible cell the dimension of the cell decreases. Since dimension is bounded, there are finitely many edges in a string of collapsible cells before the path reaches a redundant cell (or terminates). Since there are only finitely many edges going from a collapsible cell to a redundant cell by above, there are only finitely many edges in the path going from a collapsible cell to a collapsible cell. \par
		
		Finally, each edge from a redundant cell to a collapsible cell is immediately preceded by an edge from a collapsible to the redundant (or the redundant cell started the path) and we have seen there are only finitely many of these. Hence, there are a finitely many of edges going from a redundant to a collapsible. \par 
		
		Therefore, the path is finite. 
	\end{proof}
	\noindent 
	This finishes the proof that the given data define a Morse matching for $N(\cat)/G$ with a single critical cell. \Cref{th:Morse collapse} implies that $N(\cat)/G$ is contractible and completes the proof of \Cref{th: |S|/G contract}. Note that \Cref{th: |S|/G contract} implies \Cref{th:Symonds} as all finite groups are pseudo finite at a given prime $p$ that divides $|G|$.
	
	\section{Application to Saturated Fusion Systems}
	When $ G $ is a group (not necessarily finite) and $ S $ is a finite $p$-subgroup of $G$, we can form the \textit{fusion system}, denoted $ \f_S(G) $, as follows:
	\begin{itemize}
		\item The objects of $ \f_S(G) $ are the subgroups of $ S $. 
		\item Given any two subgroups $ P,Q\leq S $, $ \hom_\f(P,Q)=\hom_G(P,Q) $. 
	\end{itemize}
	\noindent  
	Broto, Levi, and Oliver defined fusion systems in \cite{broto2003homotopy}, in part based on some earlier ideas of Puig. Modified but equivalent definitions of abstract fusion systems can be found in \cite{aschbacher2011fusion}.
	\begin{defn}[\cite{puig2006frobenius}, \cite{aschbacher2011fusion}]\label{def:abs fs}
		A \textit{fusion system} over a finite $ p $-group $ S $ is a category $ \f $ such that 
		\begin{itemize}
			\item The objects of $ \f $ are the subgroups of $ S $.
			\item Given any two objects $ P,Q\leq S $, the morphism set is such that $ \hom_S(P,Q)\subset\hom_\f(P,Q)\subset\inj(P,Q) $, where $\inj(P,Q)$ is the set of all injective group homomorphisms from $P$ to $Q$. 
			\item Each $ \vp\in\hom_\f(P,Q) $ is the composite of an isomorphism in $ \f $ followed by an inclusion. 
		\end{itemize}
		We say that $ P $ and $ Q $ are $ \f $\textit{-conjugate} if they are isomorphic as objects in $ \f $. Write $ P^\f $ to denote the collection of all subgroups of $ S $ that are $ \f $-conjugate to $ P $.  
	\end{defn} 
		
	A fusion system $\f$ is said to be \emph{saturated} if it satisfies further axiom(s) as detailed in Definition 2.2 in \cite{aschbacher2011fusion}. When $G$ is a finite group and $P\in\syl_p(G)$, then $\f_P(G)$ is a saturated fusion system. However, a fusion system of the form $\f_P(G)$ is usually not saturated if $P\notin\syl_p(G)$. \par 
	
	A saturated fusion system $ \f $ is said to be realizable by the (possibly infinite) group $G$ if there is some $ S\in\syl_p(G) $ such that $ \f=\f_S(G) $.  There are examples of saturated fusion systems that are not realized by any \emph{finite} group in this way. However, following realizability result was shown independently by Robinson and by Leary and Stancu.
	\begin{theo}[{\cite[Theorem 2]{robinson2007amalgams}, \cite[Theorem 2]{leary2007realising}}]\label{th:rob}
		Let $ \f $ be a saturated fusion system over a finite $ p $-group $ S $. There is some (possibly infinite) group $ G $ and $S \in \syl_p(G)$ such that $ \f=\f_S(G) $.  
	\end{theo}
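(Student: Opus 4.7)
The plan is to follow Leary-Stancu and realize $\f$ explicitly as the fusion system of the fundamental group of a graph of groups built from $S$ and a finite generating set of $\f$-isomorphisms. First I would choose a finite set $\Phi = \{\varphi_i : P_i \xrightarrow{\sim} Q_i\}_{i=1}^{k}$ of $\f$-isomorphisms between subgroups of $S$ which, together with the inclusions $\hom_S(R,R) \subseteq \hom_\f(R,R)$ for each $R \leq S$, generate $\f$ as a category; such a set exists because $\f$ has finitely many objects and finite morphism sets. Then I would form the graph of groups $\mathcal{G}$ with a single vertex carrying the group $S$ and one loop edge $e_i$ for each $\varphi_i$, with edge group $P_i$ and the two edge inclusions given by $P_i \hookrightarrow S$ on one side and $P_i \xrightarrow{\varphi_i} Q_i \hookrightarrow S$ on the other. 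Set $G = \pi_1(\mathcal{G})$; concretely, $G$ is the iterated HNN extension of $S$ with stable letters $t_i$ subject to $t_i^{-1} x t_i = \varphi_i(x)$ for $x \in P_i$.

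Next I would verify the Sylow condition using Bass-Serre theory. Let $T$ be the Bass-Serre tree of $\mathcal{G}$; then $G$ acts on $T$ with vertex stabilizers conjugate to $S$ and edge stabilizers conjugate to the $P_i$. By Serre's fixed-point theorem, every finite subgroup of a group acting on a tree has a global fixed point, so every finite $p$-subgroup of $G$ is conjugate into $S$. In particular $S \in \syl_p(G)$ in the sense of \cref{contractibility}.

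It remains to show $\f_S(G) = \f$. The inclusion $\f \subseteq \f_S(G)$ is immediate from the construction: conjugation by elements of $S$ already lies in both fusion systems, and each generator $\varphi_i$ is realized in $G$ as the restriction of $c_{t_i}$, so every generator of $\f$ lies in $\f_S(G)$. For the reverse inclusion, I would take $g \in G$ with $P^g \leq S$ for some $P \leq S$ and argue geometrically: $P$ fixes the base vertex $v_0$ (stabilized by $S$), and the hypothesis $g^{-1} P g \leq S$ forces $P$ to fix $g v_0$ as well, so $P$ fixes the geodesic between them. Reading off the edge stabilizers along this geodesic from the Britton normal form $g = s_0 t_{i_1}^{\epsilon_1} s_1 \cdots t_{i_k}^{\epsilon_k} s_k$ (with $s_j \in S$) exhibits $c_g|_P$ as an alternating composition of conjugations $c_{s_j}$ and isomorphisms $\varphi_{i_j}^{\pm 1}$, each applied to a subgroup of $S$ that lies inside the appropriate edge group; each factor is a morphism in $\f$, so $c_g|_P \in \mor(\f)$.

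The main obstacle is the argument that $\f_S(G) \subseteq \f$: while the geometric picture is conceptually clear, one must justify rigorously that the intermediate subgroups reached after each partial conjugation are really contained in the successive edge groups $P_{i_j}$ (resp.\ $Q_{i_j}$) so that $\varphi_{i_j}^{\pm 1}$ is applicable, and that the resulting decomposition does not depend on the choice of normal form. The Sylow step and the inclusion $\f \subseteq \f_S(G)$ are essentially formal once the graph of groups has been set up correctly; the delicate bookkeeping is entirely on the side of decomposing arbitrary $G$-conjugations into generators of $\f$.
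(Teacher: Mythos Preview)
The paper does not give its own proof of this theorem; it is stated as a black box with citations to Robinson and to Leary--Stancu, and is then used as input to the proof of \Cref{th: fs contract}. So there is no proof in the paper to compare against.

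That said, your outline is a faithful sketch of the Leary--Stancu construction and is essentially correct. The graph-of-groups setup, the fixed-point argument on the Bass--Serre tree for the Sylow property, and the inclusion $\f \subseteq \f_S(G)$ are all standard and unproblematic. For the reverse inclusion your geometric description is right: the crucial point is that $P$ fixes the entire geodesic from $v_0$ to $gv_0$, hence lies in every edge stabilizer along it, and unwinding this through the normal form factors $c_g|_P$ as a composite of $\f$-morphisms. The bookkeeping you flag as the obstacle is exactly where the work lies, but it goes through; one inducts along the geodesic, and the absence of pinches in the Britton normal form guarantees that each partial conjugate genuinely sits inside the relevant edge group before the next stable letter acts. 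Independence of the normal form is a non-issue since you only need \emph{some} factorization of $c_g|_P$ into $\f$-morphisms, not a canonical one. Note also that your argument never uses saturation of $\f$, which is consistent with Leary--Stancu: their construction realizes arbitrary fusion systems, not only saturated ones.
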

	
	In his paper, Libman was able to show that if a (possibly infinite) group $ G $ realizes a saturated fusion system $ \f $ over the finite $ p $-group $ S $, then $ G $ is pseudo finite at the prime $ p $. We state his results below. 
	\begin{prop}[\cite{libman2008webb}, Proposition 3.9] \label{th:Lib sat fs realize pseudo}
		A group which realizes a saturated fusion system is pseudo finite at the prime $ p $. 
	\end{prop}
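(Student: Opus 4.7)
The plan is to induct on the dimension $n$ of the chain $\g = (P_0 \leq \cdots \leq P_n)$, taking as inductive hypothesis that \Cref{th:Lib sat fs realize pseudo} holds for every group realizing a saturated fusion system and every chain of dimension strictly less than $n$. Throughout, fix $G$ with $S \in \syl_p(G)$ and $\f = \f_S(G)$ saturated. For the base case $n = 0$, the chain is a single finite $p$-subgroup $P_0$, and we want a Sylow $p$-subgroup of $N_G(P_0)$. Since $N_G(\g^h) = N_G(\g)^h$ we may freely $G$-conjugate, so by the Sylow property of $S$ and a further $\f$-conjugation inside $S$, we may assume $P_0 \leq S$ is fully normalized in $\f$. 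The base case then reduces to the following key fact: \emph{if $P \leq S$ is fully normalized in $\f = \f_S(G)$, then $N_S(P) \in \syl_p(N_G(P))$}. This follows from the short exact sequence $1 \to C_G(P) \to N_G(P) \to \aut_\f(P) \to 1$, the saturation condition $\aut_S(P) \in \syl_p(\aut_\f(P))$, and the extension axiom applied to the conjugating isomorphism between any other $\f$-conjugate of $P$ and $P$ itself.

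For the inductive step, given $\g = (P_0 \leq \cdots \leq P_n)$ with $n \geq 1$, apply the base case to $P_n$ and $G$-conjugate the whole chain, so that we may assume $P_n$ is fully normalized and $H := N_G(P_n)$ has $N_S(P_n)$ as a Sylow $p$-subgroup. Because $P_i \leq P_n$ for every $i$, each $P_i$ lies in $H$, and $\g' := (P_0 \leq \cdots \leq P_{n-1})$ is a chain of dimension $n-1$ inside $H$. A classical theorem on normalizer fusion systems shows that $N_\f(P_n)$, viewed on the $p$-group $N_S(P_n)$, is saturated, and by construction $N_\f(P_n) = \f_{N_S(P_n)}(H)$, so $H$ realizes a saturated fusion system. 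The inductive hypothesis applied to $H$ and $\g'$ then yields a Sylow $p$-subgroup of $N_H(\g')$, and the chain of equalities $N_H(\g') = H \cap \bigcap_{i<n} N_G(P_i) = N_G(\g)$ transfers this to $N_G(\g)$, completing the induction.

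The principal obstacle is the key fact used in the base case: although standard in fusion system theory, carefully extracting $N_S(P) \in \syl_p(N_G(P))$ from the saturation axioms requires using the extension axiom on the isomorphism $P^g \to P$ (arising after conjugating the product $UP$ into $S$ for an arbitrary finite $p$-subgroup $U \leq N_G(P)$) in order to move $U^g$ back into $N_S(P)$ without disturbing $P$. Beyond this step, the argument is essentially formal: the chain-length induction and the saturation of the normalizer fusion system $N_\f(P_n)$ handle the rest.
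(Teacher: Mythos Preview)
The paper does not supply its own proof of this proposition; it is quoted directly from Libman \cite{libman2008webb} and used as a black box. So there is nothing in the present paper to compare your argument against.

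That said, your outline is correct and is essentially Libman's original argument: induction on chain length, reducing to the normalizer of the top term, and invoking that $N_\f(P_n)$ is saturated on $N_S(P_n)$ when $P_n$ is fully $\f$-normalized. One small comment on exposition: in your base case you invoke both the short exact sequence $1 \to C_G(P) \to N_G(P) \to \aut_\f(P) \to 1$ together with $\aut_S(P)\in\syl_p(\aut_\f(P))$, \emph{and} the extension-axiom argument. The first of these alone does not give $N_S(P)\in\syl_p(N_G(P))$, since you would still need $C_S(P)\in\syl_p(C_G(P))$, and that in turn requires the same extension-axiom trick. The clean argument is the one you spell out in your final paragraph: for a finite $p$-subgroup $U\leq N_G(P)$, conjugate $UP$ into $S$ by some $g\in G$, then use that $P$ is fully normalized to find $\vp\in\hom_\f(N_S(P^g),S)$ with $\vp(P^g)=P$; writing $\vp=c_h$ gives $gh\in N_G(P)$ and $U^{gh}\leq N_S(P)$. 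I would drop the short-exact-sequence sentence, or at least make clear it is motivation rather than the proof.
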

	
	For the remainder of this paper, fix a saturated fusion system $\f$ over a fixed finite $p$-group $S$. By \Cref{th:rob}, we can fix a group $G$ such that $\f=\f_S(G)$ and $S\in\syl_p(G)$, and using \Cref{th:Lib sat fs realize pseudo}, we know that $G$ is a pseudo finite group at the prime $p$. 
	\begin{defn}[\cite{libman2008webb}, Definition 1.2]\label{def: clsd f col}
		Fix a fusion system $\f$ of $S$. An $\f$\textit{-collection} is a union of $\f$-conjugacy classes of subgroups of $S$. An $\f$-collection $\cat$ is \textit{closed} if a subgroup $Q\leq S$ belongs to $\cat$ whenever it contains an element of $\cat$. 
	\end{defn} 
	
	We now let $ \cat $ be any nonempty closed $\f$-collection. It is clear that $\cat$ is a poset under inclusion. We again denote the nerve of $\cat$ as $N(\cat)$. On $N(\cat)$, define an equivalence relation $\sim_{\f}$ as follows: Given two $ n $-simplices, we have $$(P_0,P_1,\ldots,P_n)\sim_{\f}(Q_0,Q_1,\ldots,Q_n)$$ if and only if there is some isomorphism $ \vp\in\hom_\f(P_n,Q_n) $ such that $ \vp(P_i)=Q_i $ for every $ i\in\{0,1,2,\ldots,n\} $. We will denote the quotient $ N(\cat)/\!\!\sim_{\f} $ as $ N(\cat)/\f $. \par
	
	Define 
	\begin{align*}
		\hat{\cat}=\ds\bigcup_{P\in\cat}P^G
	\end{align*}
	where $P^G$ denotes the $G$-conjugacy class of $P$. By construction, the nerve $N(\hat{\cat})$ is closed under $G$-conjugation, so we can form the quotient $N(\hat{\cat})/G$. Using the following result from Libman, we have that $ N(\cat)/\f $ is isomorphic to $ N(\hat{\cat})/G $ as simplicial sets. \clearpage
	\begin{prop}[\cite{libman2008webb}, Proposition 3.10] \label{th:Lib s htpy hats}
		Let $ G $ be a group that realizes the saturated fusion system $ \f $ on the finite $ p $-group $ S $. Let $ \cat $ be an $ \f $-collection. Then $ N(\cat)/\f\to N(\hat{\cat})/G $ is an isomorphism of simplicial sets.  
	\end{prop}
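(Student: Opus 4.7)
\emph{Plan.} The map in question is induced by the inclusion $\cat \hookrightarrow \hat{\cat}$ followed by the orbit projection $N(\hat{\cat}) \to N(\hat{\cat})/G$. I would verify in turn that (i) this composite descends to a well-defined simplicial map $N(\cat)/\f \to N(\hat{\cat})/G$, (ii) it is surjective on simplices in every degree, and (iii) it is injective on simplices in every degree. Throughout, the engine of the argument is the identification $\f = \f_S(G)$: every $\f$-isomorphism between subgroups of $S$ has the form $c_g$ for some $g \in G$, and, conversely, two subgroups of $S$ that are $G$-conjugate are already $\f$-conjugate.

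\emph{Execution.} For (i), suppose $(P_0, \ldots, P_n) \sim_\f (Q_0, \ldots, Q_n)$ via an $\f$-isomorphism $\vp \in \hom_\f(P_n, Q_n)$ with $\vp(P_i) = Q_i$. Writing $\vp = c_g$ with $P_n^g = Q_n$ gives $P_i^g = Q_i$ for all $i$, so both chains determine the same $G$-orbit in $N(\hat{\cat})$. For (ii), let $(R_0 \leq \cdots \leq R_n)$ be a simplex of $N(\hat{\cat})$; the top term $R_n$ is a finite $p$-subgroup of $G$, so by the Sylow property of $S$ there exists $g \in G$ with $R_n^g \leq S$, whence $R_i^g \leq S$ for every $i$. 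Each $R_i^g \in \hat{\cat}$ is $G$-conjugate to some member of $\cat \subseteq S$; since both lie in $S$ they are $\f$-conjugate, and the $\f$-closedness of $\cat$ forces $R_i^g \in \cat$. Thus $(R_0^g, \ldots, R_n^g)$ is a preimage in $N(\cat)$. For (iii), if two chains in $N(\cat)$ become identified in $N(\hat{\cat})/G$, say via $g \in G$ with $P_i^g = Q_i$ for all $i$, then $P_n^g = Q_n$ and $c_g \in \hom_\f(P_n, Q_n)$ is an $\f$-isomorphism carrying each $P_i$ to $Q_i$, which is exactly the relation $\sim_\f$.

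\emph{Main obstacle.} The only step with genuine content is (ii), which relies on $\cat$ being an $\f$-collection and not merely an arbitrary subset of the subgroups of $S$: without the assumption that $\cat$ is a union of $\f$-conjugacy classes, one could push a chain into $S$ by Sylow but could not guarantee that the resulting entries lie back in $\cat$. With this closedness hypothesis in place, the rest of the argument is just a dictionary translation between the $\f$-picture and the $G$-picture of conjugacy afforded by $\f = \f_S(G)$.
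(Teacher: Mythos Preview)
The paper does not supply its own proof of this proposition; it is quoted from Libman \cite{libman2008webb} and used as a black box. So there is nothing in the paper to compare your argument against.

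That said, your argument is correct and is essentially the standard one. Each of the three steps goes through as you describe: (i) and (iii) are immediate from the identity $\hom_\f(P,Q) = \hom_G(P,Q)$ for $P,Q \leq S$, and (ii) uses Sylow to move a chain into $S$ and then the $\f$-collection hypothesis to land back in $\cat$.

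One terminological remark: when you write ``the $\f$-closedness of $\cat$'' in step (ii), you mean that $\cat$ is a union of $\f$-conjugacy classes (i.e., is an $\f$-collection in the sense of \cref{def: clsd f col}). In the paper, the word \emph{closed} is reserved for closure under passage to overgroups, which is a separate and stronger condition not needed here. You might rephrase to avoid the clash.
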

	With the work in \Cref{contractibility}, we can prove \Cref{th: fs contract} using discrete Morse theory:    
	\begin{theo}
		Let $\f$ be a saturated fusion system over the finite $ p $-group $ S $ and let $\cat$ be a closed $\f$-collection. The simplicial set $N(\cat)/\f$ admits a Morse matching in which the set of critical cells consists of one cell of dimension 0, and hence $N(\cat)/\f$ is contractible.
	\end{theo}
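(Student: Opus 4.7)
The plan is to reduce the theorem to \Cref{th: |S|/G contract} via the realizability result \Cref{th:rob} and Libman's identification \Cref{th:Lib s htpy hats}. First, I would apply \Cref{th:rob} to fix a (possibly infinite) group $G$ with $S\in\syl_p(G)$ and $\f=\f_S(G)$, and then invoke \Cref{th:Lib sat fs realize pseudo} to conclude that $G$ is pseudo finite at $p$.

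Next, I form the $G$-conjugacy saturation
\[
\hat{\cat}=\bigcup_{P\in\cat}P^G
\]
and verify that it satisfies the hypotheses of \Cref{th: |S|/G contract}, namely that $\hat{\cat}\subset\s_p(G)$ consists of nontrivial finite $p$-subgroups of $G$ and is closed under $G$-conjugation and passage to $p$-overgroups. Nontriviality and closure under $G$-conjugation are immediate from the definition of $\hat{\cat}$ and the fact that every element of $\cat$ is a nontrivial subgroup of $S$. For the overgroup closure, suppose $P\in\hat{\cat}$ and $Q$ is a finite $p$-subgroup of $G$ containing $P$. Write $P=(P')^g$ with $P'\in\cat$ and $g\in G$; since $S$ is Sylow in $G$, there exists $h\in G$ with $R:=(Q^{g^{-1}})^h\leq S$, and then $R\geq (P')^h$. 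As both $P'$ and $(P')^h$ lie in $S$, conjugation by $h$ restricts to a morphism in $\hom_\f(P',(P')^h)$, so $(P')^h$ is $\f$-conjugate to $P'$ and hence lies in $\cat$. The closedness of $\cat$ inside $S$ then forces $R\in\cat$, and since $Q$ is $G$-conjugate to $R$, we conclude $Q\in\hat{\cat}$.

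With $\hat{\cat}$ verified to meet the hypotheses of \Cref{th: |S|/G contract}, that theorem furnishes a Morse matching on $N(\hat{\cat})/G$ whose unique critical cell is $0$-dimensional. By \Cref{th:Lib s htpy hats}, there is a canonical isomorphism of simplicial sets $N(\cat)/\f\cong N(\hat{\cat})/G$, and the Morse matching transports along this isomorphism to give a Morse matching on $N(\cat)/\f$ with a single critical $0$-cell. Contractibility then follows immediately from \Cref{th:Morse collapse}. The only step carrying genuine content is the overgroup-closure verification for $\hat{\cat}$: here one must carefully use that any finite $p$-subgroup of $G$ can be conjugated into $S$ and that the resulting $G$-conjugation, because it sends a subgroup of $S$ to a subgroup of $S$, is by definition a morphism in $\f=\f_S(G)$. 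This is the only place where the $\f$-closedness of $\cat$ (internal to $S$) needs to be converted into $G$-closedness under arbitrary $p$-overgroups, and it is the part of the argument I would take the most care with.
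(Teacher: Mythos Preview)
Your proposal is correct and follows exactly the paper's strategy: realize $\f$ by a group $G$ via \Cref{th:rob}, note $G$ is pseudo finite by \Cref{th:Lib sat fs realize pseudo}, transport along the simplicial isomorphism of \Cref{th:Lib s htpy hats}, and invoke \Cref{th: |S|/G contract} and \Cref{th:Morse collapse}. In fact you are more careful than the paper, which applies \Cref{th: |S|/G contract} to $\hat{\cat}$ without explicitly checking overgroup closure; your verification of that hypothesis (conjugating the overgroup into $S$ and using that the resulting conjugation is an $\f$-morphism) is exactly the argument needed to fill that small gap.
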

	\begin{proof}
		By \Cref{th:rob} and \Cref{th:Lib sat fs realize pseudo}, $ \f=\f_S(G) $ for some pseudo finite group $ G $ at the prime $ p $ and $ S\in\syl_p(G) $. By \Cref{th:Lib s htpy hats}, we know that $ N(\cat)/\f $ is isomorphic to $ N(\hat{\cat})/G $ as simplicial sets. By \Cref{th: |S|/G contract}, we know that $ N(\hat{\cat})/G $ admits a Morse matching with in which the set of critical cells consists of one cell of dimension~0. Applying \Cref{th:Morse collapse}, $N(\cat)/\f$ is contractible.
	\end{proof}
	
	\section*{Acknowledgments}
	We would like to express our gratitude to our advisor, Justin Lynd, for his guidance and support throughout the writing of this paper. His dedication to our work, keen insights, and meticulous review were instrumental in shaping the content of this manuscript. This paper would not have been possible without his mentorship. We would like to extend our appreciation to Philip Hackney for his insight on the topological aspects of this paper. We are grateful for his time, insightful comments, and dedication to ensuring the integrity of our mathematical reasoning. We also want to thank Jesper Grodal and Benjamin Steinberg for reaching out after making our work public with insights and recommendations on a previous version of this paper. We finally want to thank the referee for pointing out a mistake in the construction of the Morse matching and taking the time to write a detailed report on the previous version of this paper.
	
	\bibliographystyle{alpha}
	\bibliography{ref.bib}
\end{document}